\theoremstyle{plain}
\newtheorem{Thm}{Theorem}
\newtheorem{Lem}[Thm]{Lemma}
\theoremstyle{definition}
\newtheorem{Def}[Thm]{Definition}
\begin{document}

\title[MCGs of once-stabilized Heegaard splittings]{Mapping class groups of once-stabilized Heegaard splittings}

\author{Jesse Johnson}
\address{\hskip-\parindent
        Department of Mathematics \\
        Oklahoma State University \\
        Stillwater, OK 74078
        USA}
\email{jjohnson@math.okstate.edu}

\subjclass{Primary 57N10}
\keywords{Heegaard splitting, mapping class group, curve complex}

\thanks{This project was supported by NSF Grant DMS-1006369}

\begin{abstract}
We show that if a Heegaard splitting is the result of stabilizing a high distance Heegaard splitting exactly once then its mapping class group is finitely generated.
\end{abstract}

\maketitle

A \textit{Heegaard splitting} for a compact, connected, closed, orientable 3-manifold $M$ is a triple $(\Sigma, H^-, H^+)$ where $\Sigma$ is a compact, separating surface in $M$ and $H^-$, $H^+$ are handlebodies in $M$ such that $M = H^- \cup H^+$ and $\partial H^- = \Sigma = H^- \cap H^+ = \partial H^+$.  The \textit{mapping class group} $Mod(M,\Sigma)$ of the Heegaard splitting is the group of homeomorphisms $f : M \rightarrow M$ that take $\Sigma$ onto itself, modulo isotopies that fix $\Sigma$ setwise.  

Heegaard splitings of distance greater than 3 are known to have finite mapping class groups \cite{meddist} and certain distance two Heegaard splittings are known to have virtually cyclic mapping class groups \cite{me:books}.  By distance, we mean the distance $d(\Sigma)$ defined by Hempel~\cite{hempel}, which we will review below.  However, for stabilized (distance zero) Heegaard splittings, the problem of understanding their mapping class groups is much harder.  Genus-two Heegaard splittings of the 3-sphere have finitely presented mapping class groups \cite{akbas:gen2}\cite{cho:gen2}\cite{schar:gen2}, but determining whether the mapping class groups of higher genus splittings of $S^3$ are finitely generated has proved to be a very difficult problem.  We will study Heegaard splittings that results from stabilizing a high distance Heegaard splitting exactly once:

\begin{Thm}
\label{mainthm}
Let $(\Sigma', H^-_{\Sigma'}, H^+_{\Sigma'})$ be a Heegaard surface with genus $g$ and distance $d(\Sigma') > 2g+2$.  If $(\Sigma, H^-_\Sigma, H^+_\Sigma)$ is the Heegaard splitting that results from stabilizing $\Sigma'$ exactly once then $Mod(M, \Sigma)$ is finitely generated.  
\end{Thm}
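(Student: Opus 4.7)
The plan is to have $Mod(M,\Sigma)$ act on a connected graph with finitely many vertex orbits and finitely generated vertex stabilizers, and then invoke the standard fact that such an action implies finite generation of the acting group. The vertices of the graph will be isotopy classes of \emph{destabilizing pairs}: pairs $(D^-,D^+)$ of essential disks with $D^\pm \subset H^\pm_\Sigma$ and $|\partial D^- \cap \partial D^+| = 1$, so that compressing $\Sigma$ along the pair yields a genus $g$ Heegaard surface of $M$. Edges will join pairs that admit disjoint representatives on $\Sigma$. The key external input is that $Mod(M,\Sigma')$ is finite by \cite{meddist}, since $d(\Sigma') > 3$.

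Two principal consequences of the high distance hypothesis are needed. First, by a Scharlemann-Tomova-style uniqueness theorem for Heegaard splittings, applicable because $d(\Sigma') > 2g+2 > 2g$, every destabilization of $\Sigma$ produces a Heegaard surface isotopic in $M$ to $\Sigma'$; hence the collection of destabilizing pairs is genuinely $Mod(M,\Sigma)$-invariant and the action is well-defined. Second, the same uniqueness together with the finiteness of $Mod(M,\Sigma')$ and of the set of ways that $\Sigma'$ can be stabilized to yield $\Sigma$ (essentially parametrized by a controlled set of trivial arcs in $H^\pm_{\Sigma'}$) gives finitely many $Mod(M,\Sigma)$-orbits of destabilizing pairs, and correspondingly finitely many orbits of edges.

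Vertex stabilizers can then be analyzed directly. The stabilizer of a destabilizing pair $(D^-,D^+)$ carries a natural homomorphism to the finite group $Mod(M,\Sigma')$ defined by compressing along the pair; elements of the kernel fix the destabilized splitting and are supported in a neighborhood of the stabilizing handle, where the induced group of mapping classes is finite. Thus vertex stabilizers are finite, and in particular finitely generated.

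The main obstacle I expect is connectedness of the graph. Two destabilizing pairs rarely have disjoint representatives, so joining them requires a path of intermediate pairs. My approach would be an induction on the intersection number of the boundary curves on $\Sigma$, pushing boundary curves through the disk complexes of $H^\pm_\Sigma$ and exploiting the high distance of $\Sigma'$ to rule out the local configurations that obstruct similar arguments at lower distance. With connectedness in hand, the finitely many orbits and finite vertex stabilizers combine via the standard action lemma to give finite generation of $Mod(M,\Sigma)$.
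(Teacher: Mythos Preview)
Your framework is quite different from the paper's: the paper never builds a complex for $Mod(M,\Sigma)$ to act on, but instead uses sweep-outs and the Rubinstein--Scharlemann graphic to show that every isotopy of $\Sigma$ in $M$ can be cut into finitely many pieces, each supported in one of the handlebodies $H^\pm_{\Sigma'}$. This exhibits $Isot(M,\Sigma)$ as generated by the two Scharlemann subgroups $St^\pm(\Sigma)$, each of which is already known to be finitely generated.

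There is, however, a concrete and fatal gap in your plan: the graph you define has no edges at all. Suppose $(D^-,D^+)$ and $(D'^-,D'^+)$ are distinct destabilizing pairs with $(\partial D'^-\cup\partial D'^+)\cap(\partial D^-\cup\partial D^+)=\emptyset$. A regular neighbourhood of $\partial D^-\cup\partial D^+$ in $\Sigma$ is a once-punctured torus $T_0$ with boundary $\gamma$, and $\gamma$ bounds a reducing $2$-sphere for $\Sigma$. The complement $T_0\setminus(\partial D^-\cup\partial D^+)$ is an annulus one of whose boundary circles is $\gamma$, so every simple closed curve in $\Sigma$ disjoint from $\partial D^-\cup\partial D^+$ can be isotoped into $\Sigma'_0=\Sigma\setminus T_0$; performing this ambient isotopy carries the pair $(\partial D'^-,\partial D'^+)$ into $\Sigma'_0$ while preserving their single intersection point. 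Now $\partial D'^\pm$ is disjoint from $\gamma$, and an innermost-circle surgery of $D'^\pm$ against the reducing sphere pushes $D'^\pm$ into $H^\pm_{\Sigma'}$. Thus $(D'^-,D'^+)$ is a destabilizing pair for $\Sigma'$, contradicting $d(\Sigma')>0$. So no two distinct destabilizing pairs are disjoint, the graph is an infinite discrete set of vertices, and the action lemma yields nothing.

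This is not an artifact of your particular edge relation. Any reasonable ``local'' adjacency on destabilizing pairs will run into the same wall: the high-distance hypothesis, which you hoped would \emph{help} the surgery induction, is exactly what forces any two destabilizing pairs to interact nontrivially. Establishing connectivity of a useful complex here is essentially equivalent in difficulty to the theorem itself, and the paper's sweep-out machinery is precisely the geometric input that substitutes for such a connectivity statement. Your remarks on vertex orbits and vertex stabilizers are on the right track, but the argument cannot proceed without a connected complex, and producing one requires the kind of analysis you were trying to bypass.
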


In fact, we will describe an explicit generating set in Section~\ref{handlebodysect}. Because every automorphism of $(M, \Sigma)$ is an automorphism of $M$, there is a canonical map $i : Mod(M, \Sigma) \rightarrow Mod(M)$.  We will write $Isot(M, \Sigma)$ for the kernel of the map $i$, and will call this the \textit{isotopy subgroup}.  When $M$ is hyperbolic, $Isot(M, \Sigma)$ will be a finite index subgroup of $Mod(M, \Sigma)$.

To prove Theorem~\ref{mainthm}, we show that $Isot(M, \Sigma)$ is finitely generated in a very specific way.  This will imply that $Mod(M, \Sigma)$ is finitely generated because $d(\Sigma') > 2g+2 > 3$, so $M$ must be hyperbolic, and $Isot(M, \Sigma)$ is finite index in $Mod(M, \Sigma)$. We will see that $Isot(M, \Sigma)$ is generated by two subgroups that have been recently classified by Scharlemann~\cite{schar:hbdy}, with a well understood intersection.  We conjecture that $Isot(M, \Sigma)$ is in fact a free product with amalgamation, but we are unable to prove this.

Note that every element in $Isot(M, \Sigma)$ is determined by an isotopy of $\Sigma$ in $M$, i.e.\ a continuous family of embedded surfaces $\{\Sigma_r\}$ such that $\Sigma_0 = \Sigma_1 = \Sigma$.  The two subgroups that generate $Isot(M, \Sigma)$ will be defined as follows:

The stabilized Heegaard surface $\Sigma$ can be isotoped into either of the handlebodies $H^\pm_{\Sigma'}$ bounded by the original Heegaard surface $\Sigma'$. After such an isotopy it forms a Heegaard surface for the handlebody.  Define $St^-(\Sigma)$ as the subgroup of $Isot(M, \Sigma)$ corresponding to isotopies entirely in $H^-_{\Sigma'}$ and let $St^+(\Sigma)$ be the subgroup corresponding to isotopies in $H^+_{\Sigma'}$.

These two subgroups are the isotopy subgroups of the mapping class groups for $\Sigma$, thought of as a Heegaard splitting for $H^-_{\Sigma'}$ or $H^+_{\Sigma'}$.  Scharlemann~\cite{schar:hbdy} shows that such groups are finitely generated.  We will show that $Isot(M, \Sigma)$ is generated by these two subgroups.

We review this generating set in Section~\ref{handlebodysect}, then determine a condition on isotopies of $\Sigma$ in $M$ that will guarantee that the corresponding element of $Isot(M, \Sigma)$ is generated by elements of $St^-(\Sigma) \cup St^+(\Sigma)$.  The proof is based on the double sweep-out machinery developed by Cerf~\cite{cerf}, Rubinstein-Scharlemann~\cite{rub:compar} and the author~\cite{me:stabs}, which we review Sections~\ref{sweepsect} and~\ref{facingsect}.  The proof of this Lemma and Theorem~\ref{mainthm} are completed in Section~\ref{proofsect}.

\section{Mapping class groups in handlebodies}
\label{handlebodysect}

A genus $g$ handlebody $H$ has, up to isotopy, a unique Heegaard splitting for each genus $h \geq g$~\cite{schthom:crossi}.  For $h = g$, this Heegaard splitting is a boundary parallel surface, which cuts $H$ into a genus $g$ handle-body and a trivial compression body $\partial H \times [0,1]$.  The higher genus Heegaard splittings come from adding unknotted handles to the genus $g$ Heegaard splitting, as in Figure~\ref{fig:stabilizedsplitting}. This construction is called \textit{stabilization}.
\begin{figure}[htb]
  \begin{center}
  \includegraphics[width=3in]{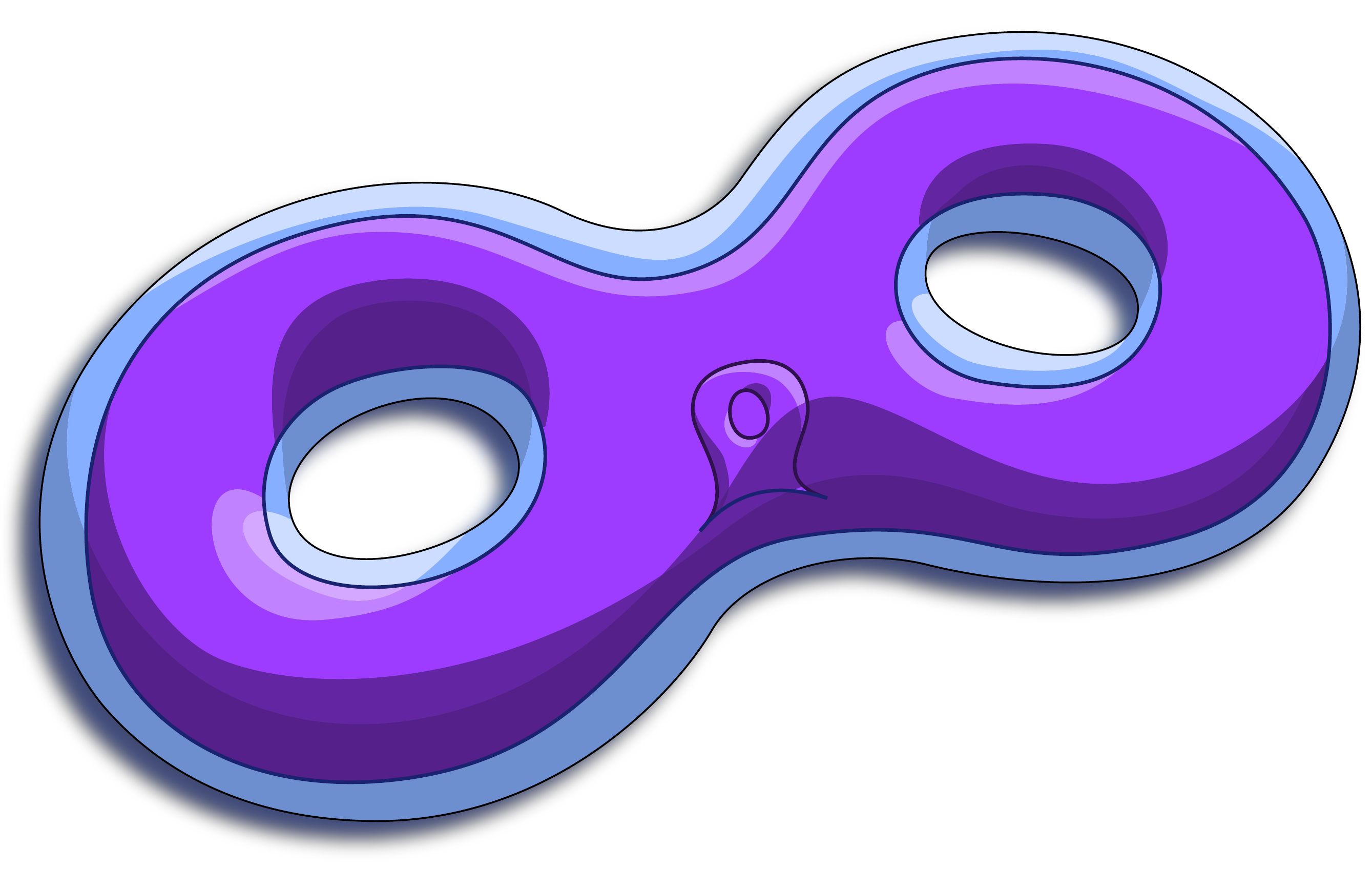}
  \caption{The genus $3$ Heegaard surface for a genus two handlebody.}
  \label{fig:stabilizedsplitting}
  \end{center}
\end{figure}

An equivalent way to construct a genus $g + k$ Heegaard splitting of a handlebody $H$ is to take $k$ boundary parallel, properly embedded arcs $\alpha_1,\dots,\alpha_k$ in $H$ and let $\Sigma$ be the boundary of a regular neighborhood of $\partial H \cup \alpha_1 \cup \dots \cup \alpha_k$.

Let $\Sigma$ be the genus $g + 1$ Heegaard splitting for $H$. Scharlemann~\cite{schar:hbdy} has determined a very simple generating set for the isotopy subgroup $Isot(H, \Sigma)$ in terms of the boundary parallel arc $\alpha \subset H$ that defines $\Sigma$.  The surface $\Sigma$ bounds a genus $g+1$ handlebody on one side and a compression body with a single one-handle on the other side.  There is exactly one non-separating compressing disk in the compression body, dual to the arc $\alpha$, so the isotopy subgroup of $\Sigma$ can be described entirely in terms of isotopies of $\alpha$.

Let $D \subset H$ be a disk whose boundary consists of the arc $\alpha$ and an arc in $\partial H$ (since $\alpha$ is unknotted) and let $E_1,\dots,E_g$ be a collection of compressing disks for $\partial H$ that are disjoint from $D$ and cut $H$ into a single ball.  By Theorem 1.1 in~\cite{schar:hbdy}, $Isot(H, \Sigma)$ is generated by the following two subgroups. (We use slightly different notation here.) \\

\begin{enumerate}
  \item Let $\mathfrak{F}(H, \Sigma)$ be the subgroup generated by isotoping the disk $D$ around $H$, and dragging $\alpha$ with it.  Because $D \cap \partial H$ is an arc, each element is defined by a path in $\partial H$, modulo spinning around a regular neighborhood of $D \cap \partial H$.  Thus this subgroup is isomorphic to an extension of $\pi_1(\partial H)$ by the integers. \\
  \item Let $\mathfrak{A}(H, \Sigma)$ be the subgroup generated by fixing one endpoint of $\alpha$ and dragging the other around in the complement of a collection of properly embedded disks $D_1,\dots,D_g$ whose complement in $H$ is a single ball.  Because the complement of the disks is the ball, any path of one endpoint can be extended to an isotopy of $\alpha$ that ends back where it started.  Any such mapping class is determined by an element of the fundamental group of the planar surface $\partial H \setminus (\bigcup D_i)$, so this group is isomorphic to the fundamental group of the planar surface. \\
\end{enumerate}
\begin{figure}[htb]
  \begin{center}
  \includegraphics[width=4.5in]{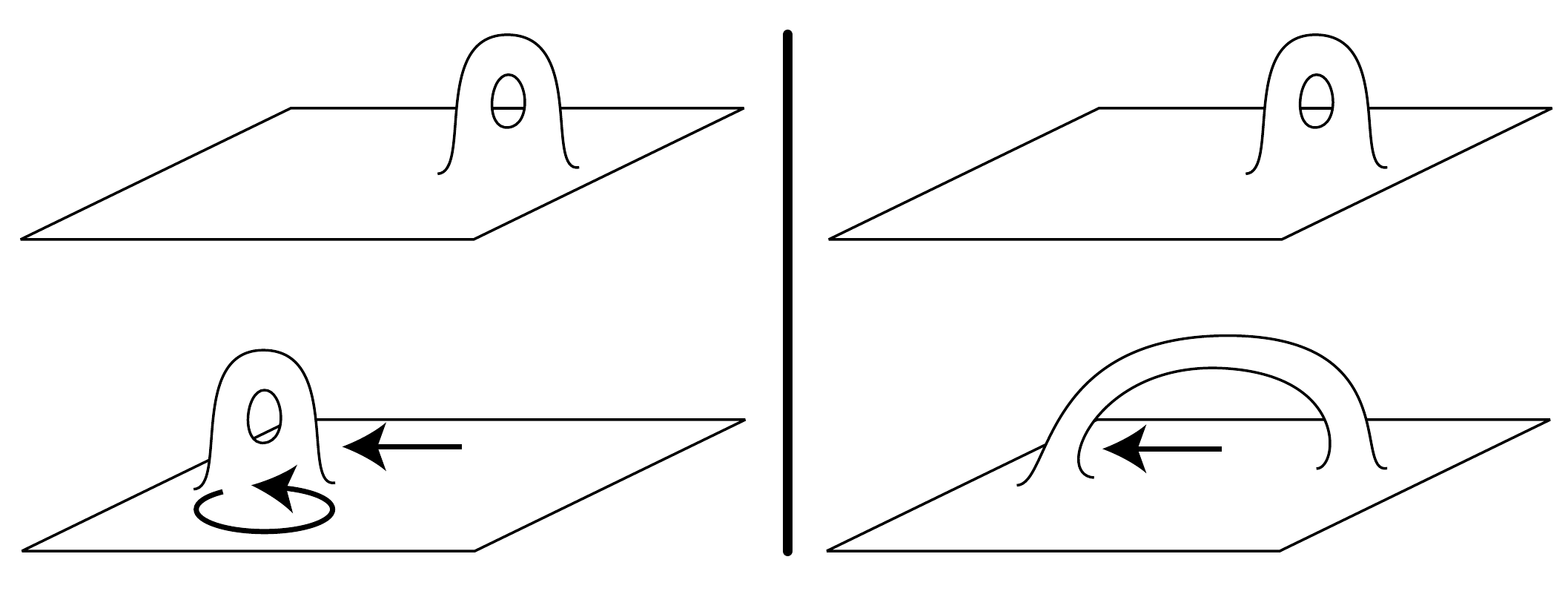}
  \caption{The isotopies that generate $\mathfrak{F}(H, \Sigma)$ and $\mathfrak{A}(H, \Sigma)$.}
  \label{fig:scharlemoves}
  \end{center}
\end{figure}

In particular, each of these subgroups is finitely generated, so $Isot(H, \Sigma)$ is finitely generated.  Each of the subgroups $St^\pm(\Sigma) \subset Isot(M, \Sigma)$ is isomorphic to $Isot(H, \Sigma)$.  Their intersection contains the subgroups $\mathfrak{F}(H^\pm_{\Sigma'}, \Sigma)$ in each handlebody because the isotopies defining these subgroups can be carried out within a regular neighborhood of the boundary.

We would like to show that if $\Sigma'$ is a high distance Heegaard splitting then every element of $Isot(M, \Sigma)$ is a product of elements of $St^-(\Sigma)$ and $St^+(\Sigma)$.  Fix spines $K^-$, $K^+$ of the handebodies $H^-_{\Sigma'}$, $H^+_{\Sigma'}$. The key will be the following Lemma:

\begin{Lem}
\label{ifneverbothlem}
Let $\{\Sigma_r |\ r \in [0,1]\}$ be an isotopy of the surface $\Sigma$ and assume there is a sequence of values $0 = r_0 < r_1 < \dots < r_k = 1$ with the property that for $r \in [r_i, r_{i+1}]$, the surface $\Sigma_r$ is disjoint from $K^-$ when $i$ is even and disjoint from $K^+$ when $i$ is odd.  Moreover, assume that each $\Sigma_{r_i}$ is a Heegaard surface for the complement of the two spines.  Then the element of $Isot(M, \Sigma)$ defined by $\{\Sigma_r\}$ is generated by the elements of $St^-(\Sigma) \cup St^+(\Sigma)$.
\end{Lem}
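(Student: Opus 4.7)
The plan is to break the isotopy $\{\Sigma_r\}$ into pieces that each live in the complement of one of the spines, close up each piece into a loop of surfaces based at $\Sigma$, and then push each resulting loop into the corresponding handlebody $H^\pm_{\Sigma'}$, where it represents an element of $St^\pm(\Sigma)$. The two main tools are the uniqueness of Heegaard splittings of a product $\Sigma'\times I$ and a ``crushing'' ambient isotopy of $M$ that realizes the deformation retract of $M\setminus K^\mp$ onto $H^\pm_{\Sigma'}$.

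First I would use that the complement of open regular neighborhoods of the two spines in $M$ is homeomorphic to $\Sigma'\times[0,1]$, and that Heegaard splittings of such a product of a given genus are unique up to ambient isotopy (by classical results of Scharlemann--Thompson on Heegaard splittings of product manifolds). Applying this to the surfaces $\Sigma_{r_i}$, which all have genus $g+1$, I obtain ambient isotopies $\beta_i$, supported in the complement of both spines, that carry $\Sigma$ to $\Sigma_{r_i}$; take $\beta_0,\beta_k$ to be the identity. Inserting the null-homotopic loops $\beta_i^{-1}\cdot\beta_i$ at each time $r_i$ (for $0<i<k$) homotopes $\{\Sigma_r\}$ rel endpoints---hence without changing its class in $Isot(M,\Sigma)$---to an isotopy that passes through $\Sigma$ at each such time, and which splits as a concatenation $\gamma_0\cdot\gamma_1\cdots\gamma_{k-1}$ with $\gamma_i=\beta_i\cdot\{\Sigma_r\}_{r\in[r_i,r_{i+1}]}\cdot\beta_{i+1}^{-1}$ a loop of surfaces based at $\Sigma$. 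Since the $\beta_i$ miss both spines, each $\gamma_i$ inherits the spine-avoidance of its middle piece: $\gamma_i\subset M\setminus K^-$ for $i$ even and $\gamma_i\subset M\setminus K^+$ for $i$ odd.

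Next I would convert each $\gamma_i$ into an element of $St^\pm(\Sigma)$. For $i$ even, realize $\gamma_i$ via the isotopy extension theorem as an ambient isotopy $f_t$ of $M$ supported in a compact set $C\subset M\setminus K^-$ with $f_1(\Sigma)=\Sigma$. Construct an ambient isotopy $\phi_s$ of $M$, for $s\in[0,1)$, that shrinks $H^-_{\Sigma'}$ into an arbitrarily small neighborhood of $K^-$---this can be built from a sweep-out of $M$ between $K^-$ and $K^+$, by sliding $\Sigma'$ progressively deeper into $H^-_{\Sigma'}$ along the level sets. For $s$ close enough to $1$, $\phi_s(C)\subset H^+_{\Sigma'}$, so the conjugate $\phi_s\circ f_t\circ\phi_s^{-1}$ is an ambient isotopy of $M$ supported in $H^+_{\Sigma'}$ taking $\phi_s(\Sigma)$ to itself. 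Because $\phi_s$ is itself isotopic to the identity in $M$, this conjugate represents the same element of $Isot(M,\Sigma)$ as $f_t$; being supported in $H^+_{\Sigma'}$, that element lies in $St^+(\Sigma)$. The case $i$ odd is symmetric, and concatenating these elements gives $[\{\Sigma_r\}]\in\langle St^-(\Sigma)\cup St^+(\Sigma)\rangle$ as required.

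The main obstacle is the last step: the construction of the crushing isotopy $\phi_s$ and the verification that conjugation by it lands us in $St^+(\Sigma)$. The crushing isotopy exists because $H^-_{\Sigma'}\setminus K^-$ admits a sweep-out by parallel copies of $\Sigma'$ approaching $K^-$, but making $\phi_s$ a true homeomorphism of all of $M$ (and not just of $M\setminus K^-$) requires some care near the spine. The subtler identification point is that $\phi_s(\Sigma)\neq\Sigma$, so the conjugated isotopy is naturally a loop based at $\phi_s(\Sigma)$, not at $\Sigma$; one must argue that under the canonical identification of $Isot(H^+_{\Sigma'},\phi_s(\Sigma))$ with $St^+(\Sigma)\subset Isot(M,\Sigma)$---coming from the fact that $\phi_s$ is isotopic to the identity in $M$---the loop $\phi_s\gamma_i\phi_s^{-1}$ corresponds to $\gamma_i$. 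This is a routine but delicate piece of bookkeeping.
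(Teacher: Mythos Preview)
Your proof is correct and follows essentially the same strategy as the paper's: break the isotopy at the $r_i$, close each piece into a loop based at $\Sigma$, and then conjugate by an ambient isotopy that shrinks a neighborhood of one spine onto the corresponding $H^\pm_{\Sigma'}$ to push each loop into a handlebody. The only differences are cosmetic---you build the closing arcs $\beta_i$ explicitly in $\Sigma'\times I$ via Scharlemann--Thompson, while the paper phrases the same step as choosing coset representatives of $St^-(\Sigma)\cap St^+(\Sigma)$, and the basepoint bookkeeping you flag is precisely what the paper leaves implicit in the word ``conjugate.''
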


\begin{proof}
If every $\Sigma_r$ in the isotopy is disjoint from both spines $K^\pm$ then there is a value $\epsilon$ such that each $\Sigma_r$ is contained in $f^{-1}([\epsilon, 1-\epsilon])$.  This isotopy is conjugate to an isotopy in $H^-_{\Sigma'}$ as well as to an isotopy in $H^+_{\Sigma'}$, so it determines an element in the subgroup $St^-(\Sigma) \cap St^+(\Sigma)$.

If an isotopy $\{\Sigma_r\}$ ends with $\Sigma_1$ disjoint from both spines and isotopic to but not equal to $\Sigma$, then we can extend the isotopy so that $\Sigma_{1+\delta} = \Sigma$.  This extension is not unique, but it is well defined up to multiplication by elements in $St^-(\Sigma) \cap St^+(\Sigma)$.  Thus such an isotopy ending disjoint from $K^- \cup K^+$ determines a coset of the intersection.

If an isotopy is disjoint from $K^+$ and ends with the image of $\Sigma$ isotopic to a Heegaard surface for the complement of $K^+$ and $K^-$ then it can be extended to an isotopy that takes $\Sigma$ onto itself. Moreover, because the isotopy is disjoint from $K^+$, it is disjoint from the closure of some regular neighborhood $N$ of $K^+$ and $N$ is isotopic to $H^+(\Sigma')$. Thus by conjugating the isotopy of $\Sigma$ with an ambient isotopy that takes $N$ onto $H^+(\Sigma')$, we can turn the original isotopy into an isotopy of $\Sigma$ in $H^-_{\Sigma'}$. Such an isotopy determines a coset of the intersection subgroup inside $St^-(\Sigma)$.  Similarly, if the isotopy is disjoint from $K^-$, it determines a coset inside $St^+(\Sigma)$.  

We have assumed that our isotopy $\{\Sigma_r\}$ can be cut into finitely many sub-intervals such that in each interval, $\Sigma_r$ is always disjoint from $K^-$ or always disjoint from $K^+$. The restriction of the isotopy $\{\Sigma_r\}$ to each interval determines a coset of $St^-(\Sigma) \cap St^+(\Sigma)$ in either $St^-(\Sigma)$ or $St^+(\Sigma)$.  The element of $Isot(M, \Sigma)$ defined by the entire isotopy is a product of representatives for these cosets, and is thus in the subgroup generated by $St^-(\Sigma) \cup St^+(\Sigma)$.
\end{proof}

To prove Theorem~\ref{mainthm}, we must show that we can always find an isotopy of this form.  The rest of the paper will be devoted to proving this:

\begin{Lem}
\label{neverbothlem}
If $d(\Sigma')$ is greater than $2g+2$ then every element of $Isot(M, \Sigma)$ is represented by an isotopy satisfying the conditions of Lemma~\ref{ifneverbothlem}.
\end{Lem}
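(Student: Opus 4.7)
The plan is to take an arbitrary isotopy $\{\Sigma_r\}$ representing the given element of $Isot(M,\Sigma)$ and deform it, through path-homotopies in the space of embeddings of $\Sigma$ in $M$ fixing both endpoints, into an isotopy of the form demanded by Lemma~\ref{ifneverbothlem}. The central tool is a double sweep-out: fix a sweep-out $f \colon M \to [-1,1]$ whose generic level is isotopic to $\Sigma'$, with $f^{-1}(-1)=K^-$ and $f^{-1}(1)=K^+$. Then $\Sigma_r \cap K^- = \emptyset$ iff $a(r) := \min f|_{\Sigma_r} > -1$, and $\Sigma_r \cap K^+ = \emptyset$ iff $b(r) := \max f|_{\Sigma_r} < 1$. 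After a Cerf-generic perturbation of the family $(x,r)\mapsto f(\Sigma_r(x))$, the critical values of $f|_{\Sigma_r}$ trace out the Rubinstein--Scharlemann graphic in $[0,1]\times[-1,1]$, and the functions $a(r), b(r)$ are piecewise smooth. In particular the \emph{spanning} set $B = \{r : a(r) = -1 \text{ and } b(r) = 1\}$ is a finite union of closed subintervals.

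The first main step is to eliminate $B$. Whenever $r \in B$ the surface $\Sigma_r$ separates $K^-$ from $K^+$, and for a generic regular value $t$ of $f|_{\Sigma_r}$ the intersection with the level $f^{-1}(t) \cong \Sigma'$ is a nonempty system of simple closed curves. Following the sweep-out comparison machinery reviewed in Sections~\ref{sweepsect} and~\ref{facingsect}, and arguing in the spirit of \cite{me:stabs}, the hypothesis $d(\Sigma') > 2g+2 = 2\,\mathrm{genus}(\Sigma)$ should rule out any scenario in which both an ``upper'' and a ``lower'' essential curve family persist across an entire component of $B$: one of them must in fact bound a compressing disk for $\Sigma_r$ lying inside $H^-_{\Sigma'}$ (respectively $H^+_{\Sigma'}$) for all $r$ in that component. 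Compressing along this disk through the whole component replaces the relevant piece of the isotopy by one in which $\Sigma_r$ is disjoint from the corresponding spine, yielding a homotopic isotopy with smaller $B$. After finitely many such moves, $B$ is empty.

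The main obstacle, and the place where the distance hypothesis is used a second time, is then to arrange that the second condition of Lemma~\ref{ifneverbothlem} holds at each transition time $r_i$: that $\Sigma_{r_i}$ is a Heegaard surface for $M\setminus(K^-\cup K^+) \cong \Sigma'\times(-1,1)$. A surface disjoint from both spines could a priori be positioned very wildly inside this product, but Scharlemann--Tomova-style uniqueness for once-stabilized high distance splittings, which is precisely the regime $d(\Sigma') > 2g+2$, forces $\Sigma_{r_i}$ to be isotopic inside the product to a standard once-stabilized level surface. A small homotopy of the isotopy, supported near $r_i$ and inside the product region, realizes this standard form. The hardest points of the argument will be (i) producing a compressing disk whose boundary is truly persistent across an entire component of $B$, rather than at isolated parameter values, which is where one must combine the distance estimate with Morse-theoretic tracking along edges of the graphic; and (ii) verifying that the local adjustments at the transition points do not reintroduce spanning behavior in their neighborhoods. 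Combining all of the above produces an isotopy satisfying the hypotheses of Lemma~\ref{ifneverbothlem}, so the given element lies in $\langle St^+(\Sigma), St^-(\Sigma)\rangle$.
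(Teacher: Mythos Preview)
Your overall strategy---study $f|_{\Sigma_r}$ as $r$ varies, use the distance bound to control the graphic, then repair the transition points---is thematically on target, but the central move in your first main step does not work as written, and the fix is exactly the idea the paper uses and you do not.

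\textbf{The compression step is a genuine gap.} You write that on each component of $B$ you can find a loop on $\Sigma_r$ bounding a compressing disk in one handlebody, and that ``compressing along this disk'' yields a new isotopy missing one spine. But compressing a genus $g{+}1$ surface produces a surface of strictly smaller genus (or a disconnected one); the result is no longer an embedding of $\Sigma$, so you have not produced a homotopic path of embeddings of $\Sigma$. What you actually need is an \emph{isotopy} of $\Sigma_r$ off the spine, and the existence of a compressing disk on one side does not by itself supply one. Nor does the claim that such a disk persists across an entire component of $B$ follow easily from the distance bound; you acknowledge this as one of the ``hardest points'' but give no mechanism.

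\textbf{What the paper does instead.} The paper never starts from an arbitrary isotopy of $\Sigma$. It extends the given element to an isotopy of a full sweep-out $h$ representing $\Sigma$, so that for each $r$ one has an entire one-parameter family $h_r^{-1}(s)$, and applies the spanning/splitting dichotomy of Section~\ref{facingsect} to the pair $(f,h_r)$. The hypothesis $d(\Sigma')>2g+2$ combined with Lemma~\ref{nospanninglem} forces $h_r$ to \emph{span} $f$ for every generic $r$: there is a continuously chosen level $s_r$ and values $a_r<b_r$ so that $\Sigma_r := h_r^{-1}(s_r)$ is mostly above $\Sigma'_{a_r}$ and mostly below $\Sigma'_{b_r}$. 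This is much stronger than merely knowing $\Sigma_r$ intersects some level in essential curves: by definition of ``mostly above/below,'' the pieces of $\Sigma_r$ beyond levels $a_r$ or $b_r$ lie in disks of $\Sigma_r$. Since $\Sigma$ has genus only $g{+}1$, at each $r$ the level loops at one of $a_r,b_r$ are trivial in $\Sigma'$ as well, so each such loop bounds a disk in both surfaces. One then performs a disk-for-disk \emph{swap} (projecting the disk in $\Sigma_r$ onto the disk in $\Sigma'_{a_r}$ or $\Sigma'_{b_r}$), which is an isotopy of $\Sigma_r$, not a compression, and pushes it entirely to one side of the spine. This is precisely the structure your argument lacks.

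\textbf{Transition points.} Your appeal to Scharlemann--Tomova is in roughly the right spirit, but the paper proceeds differently and more directly: it shows $\Sigma_{r_i}$ is bicompressible in the product $\Sigma'\times(-1,1)$, invokes the tailored Lemma~\ref{bicomplem} to conclude it is reducible, and then uses that $\Sigma_r$ was already a Heegaard surface for the handlebody $M\setminus K^\pm$ on the adjacent interval together with Waldhausen's theorem on genus-one splittings of the ball to conclude that the reducing sphere cuts off an unknotted handle. The point is that ``bicompressible'' at $r_i$ is obtained for free from the choice of subdivision (each adjacent interval contributes a compression on one side), whereas your setup does not build this in.
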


\section{Sweep-outs and graphics}
\label{sweepsect}

A \textit{sweep-out} is a smooth function $f : M \rightarrow [-1,1]$ such that for each $t \in (-1,1)$, the level set $f^{-1}(t)$ is a closed surface.  Moreover, $f^{-1}(-1)$ and $f^{-1}(1)$ is each a graph, called a \textit{spine} of the sweep-out.  The preimages $f^{-1}([-1,t])$ and $f^{-1}([t,1])$ are handlebodies for each $t \in (-1,1)$ so each level surface $f^{-1}(t)$ is a Heegaard surface for $M$ and the spines of the sweep-outs are spines of the two handlebodies in this Heegaard splitting.   See~\cite{me:stabs} for a more detailed description of the methods described in this section.

We will say that a sweep-out \textit{represents} a Heegaard splitting $(\Sigma', H^-_{\Sigma'}, H^+_{\Sigma'})$ if $f^{-1}(-1)$ is isotopic to a spine for $H^-_{\Sigma'}$ and $f^{-1}(1)$ is isotopic to a spine for $H^+_{\Sigma'}$. The level surfaces $f^{-1}(t)$ of such a sweep-out will be isotopic to $\Sigma'$.  Because the complement of the spines of a Heegaard splitting is a surface cross an interval, we can construct a sweep-out for any Heegaard splitting, i.e. we have the following:

\begin{Lem}
Every Heegaard splitting of a compact, connected, closed orientable, smooth 3-manifold is represented by a sweep-out.
\end{Lem}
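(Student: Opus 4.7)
The plan is to build the sweep-out directly from the product structure on the complement of spines of the two handlebodies, then smooth.

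First I choose spines $K^- \subset H^-$ and $K^+ \subset H^+$, that is, graphs onto which the handlebodies deformation retract. A standard fact about handlebodies is that each one minus its spine is homeomorphic to its boundary crossed with a half-open interval: $H^\pm \setminus K^\pm \cong \Sigma \times [0,1)$, with $\Sigma = \partial H^\pm$ corresponding to $\Sigma \times \{0\}$. Gluing the two product structures along $\Sigma$ produces a homeomorphism $\phi : M \setminus (K^- \cup K^+) \to \Sigma \times (-1,1)$ with $\phi^{-1}(\Sigma \times \{0\}) = \Sigma$.

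Second, I define a continuous function $f_0 : M \to [-1,1]$ by composing $\phi$ with projection to the second coordinate of $\Sigma \times (-1,1)$, extending by $f_0(K^\pm) = \pm 1$. By construction each level set $f_0^{-1}(t)$ for $t \in (-1,1)$ is a copy of $\Sigma$, the graphs $K^\pm = f_0^{-1}(\pm 1)$ are the prescribed spines, and each sublevel set $f_0^{-1}([-1,t])$ and superlevel set $f_0^{-1}([t,1])$ is a handlebody because it consists of a spine together with a collar of it.

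Third, I upgrade $f_0$ to a smooth function $f$ with the same level structure. Away from the spines, a smooth tubular neighborhood theorem for $\Sigma$ inside $M$ lets me arrange the product structure to be smooth, so the projection is automatically smooth on a neighborhood of $\Sigma$. Near each spine I replace $f_0$ by a smooth local model built vertex-by-vertex: in a small ball around a degree $n$ vertex of $K^\pm$, one writes down an explicit smooth function on $B^3$ whose minimum (or maximum) set is an $n$-pronged graph matching the local combinatorics of the spine, while along edges away from vertices one uses the quadratic $\mp 1 \pm r^2$ in a tubular coordinate $r$ measuring distance to the edge. A partition of unity interpolates between these local models and the product projection on the collar, producing a smooth $f$ with the same level sets and handlebody sublevel sets as $f_0$.

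The main obstacle is ensuring smoothness at the vertices of the spines while preserving that $f^{-1}(\pm 1)$ equals $K^\pm$ exactly. This is a purely local problem, solved by the explicit cone-like models at each vertex; once those are in place, the gluing by partition of unity is routine. The resulting $f$ represents the given Heegaard splitting by construction, since $f^{-1}(-1) = K^-$ is a spine for $H^-$, $f^{-1}(1) = K^+$ is a spine for $H^+$, and the intermediate level sets sweep through parallel copies of $\Sigma$.
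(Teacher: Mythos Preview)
Your argument is essentially the paper's own: the paper justifies the lemma only in the single sentence preceding it (``the complement of the spines of a Heegaard splitting is a surface cross an interval'') and gives no further proof, so your construction via the product projection on $M \setminus (K^- \cup K^+) \cong \Sigma \times (-1,1)$ is exactly the intended one, with the smoothing details spelled out. The one imprecision is the final step: a partition-of-unity average of two functions does not in general have the same level sets as either summand, so it is cleaner to arrange your local vertex and edge models to agree with the product projection on a collar of the boundary of a regular neighborhood of $K^\pm$ and then glue directly rather than interpolate.
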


Given two sweep-outs, $f$ and $h$, their product is a smooth function $f \times h : M \rightarrow [-1,1] \times [-1,1]$.  (That is, we define $(f \times h)(x) = (f(x),h(x))$.)  The \textit{discriminant set} for $f \times h$ is the set of points where the level sets of the two functions are tangent.

Generically, the discriminant set will be a one dimensional smooth submanifold in the complement in $M$ of the spines~\cite{Kob:disc,mather}.  The function $f \times h$ defines a piecewise smooth map from this collection of arcs and loops into the square $[-1,1] \times [-1,1]$. At the finitely many non-smooth points, the image is a cusp, which we will think of as a valence-two vertex. At the finitely many points where the restriction of $f$ is two-to-one, we see a crossing which we will think of as a valence-four vertex. There are also valence-one and -two vertices in the boundary of the square. The resulting graph is called the \textit{Rubinstein-Scharlemann graphic} (or just the \textit{graphic} for short). Kobayashi-Saeki's approach~\cite{Kob:disc} uses singularity theory to recover the machinery originally constructed by Rubinstein and Scharlemann in ~\cite{rub:compar} using Cerf theory~\cite{cerf:strat}.

\begin{Def}
The function $f \times h$ is \textit{generic} if the discriminant set is a smooth one-dimensional manifold and each arc $\{t\} \times [-1,1]$ or $[-1,1] \times \{s\}$ contains at most one vertex of the graphic.
\end{Def}

Kobayashi and Saeki~\cite{Kob:disc} have shown that after an isotopy of $f$ and $h$, we can assume that $f \times h$ is generic.  The author has generalized this to an isotopy of sweep-outs as follows:

\begin{Lem}[Lemma 34 in~\cite{me:stabs}]
\label{isotopesweepslem}
Every isotopy of the sweep-out $h$ is conjugate to an isotopy $\{h_r\}$ so that the graphic defined by $f$ and $h_r$ is generic for all but finitely many values of $r$.  At the finitely many non-generic points, one of six changes can occur to the graphic, indicated in Figure~\ref{fig:graphicmoves}:
\begin{enumerate}
\item A pair of cusps forming a bigon may cancel with each other of be created.
\item A pair of cusps adjacent to a common crossing may cancel or be created (similar to a Reidemeister one-move).
\item Three crossing may perform a Reidemeister three-move.
\item Two parallel edges may pinch together to form a pair of cusps, or vice versa.
\item Two edges may perform a Reidemeister two-move.
\item A cusp may pass across an edge.
\end{enumerate}
\end{Lem}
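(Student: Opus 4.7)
The plan is to treat this as a one-parameter family of parametrized Morse problems and invoke Cerf-style singularity theory. First, I would package the given isotopy as a smooth map
\[
F : M \times [0,1] \to [-1,1]^2 \times [0,1], \qquad F(x,r) = \bigl(f(x), h_r(x), r\bigr).
\]
For each fixed $r$ the discriminant set $\Delta_r \subset M$ (where the level surfaces of $f$ and $h_r$ are tangent) is the zero set of a smooth ``tangency function'' on the complement of the spines, and the graphic is the image of $\Delta_r$ under $f \times h_r$. The union $\bigcup_r \Delta_r \times \{r\}$ is the full discriminant in $M \times [0,1]$, and what must be shown is that after a conjugation of $\{h_r\}$ by an ambient isotopy one may arrange this to be a smooth $2$-manifold whose projection to $[-1,1]^2 \times [0,1]$ fails to be generic only at finitely many values of $r$, with the failures of the six listed types.

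Second, I would appeal to a transversality argument: the space of sweep-out pairs decomposes into an open generic stratum (where $f \times h$ is generic in the sense of the Definition above) and codimension-one degenerate strata. By the multijet transversality theorem, a generic one-parameter family meets these strata transversally at finitely many points and is disjoint from all higher-codimension strata. Since conjugation by ambient isotopies acts transitively enough on the family $\{h_r\}$ to perform such a perturbation without altering the element of $Isot(M,\Sigma)$ it represents, one may assume the family is generic in this sense.

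Third, I would enumerate the codimension-one strata, which come in two flavors. The \emph{local} degenerations are those where the singularity type of $f \times h_r$ at a single point of $M$ degenerates; a normal-form computation near a degenerate tangency shows that the one-parameter unfoldings are precisely the cusp-pair birth/death (a bigon of cusps) giving move (1), the cusp-crossing cancellation giving move (2), and the pinch between parallel edges giving move (4). The \emph{global} degenerations are those where $f \times h_r$ remains locally generic but two distinct vertices of the graphic happen to lie on a common horizontal or vertical line: two crossings aligning gives the Reidemeister-three move (3), a crossing and an edge aligning gives the Reidemeister-two move (5), and a cusp aligning with an edge gives move (6). Each case is handled by writing down a local model for the relevant two singular points in $M$ and watching how the images under $f \times h_r$ move relative to the foliation of $[-1,1]^2$ by horizontal and vertical lines.

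The main obstacle is in fact carrying out this enumeration completely: one must verify that every codimension-one degeneration of a pair of Morse functions, combined with the ``one vertex per horizontal/vertical line'' condition built into genericity, falls into exactly one of these six classes, and that no higher-codimension phenomena sneak in after the transversality perturbation. This is essentially a careful list of normal forms in the Kobayashi--Saeki framework, together with a check that the permitted isotopies of $h_r$ are rich enough to realize the needed transversality while preserving the mapping class of the isotopy in $Isot(M,\Sigma)$. With these normal forms in hand, the six moves are exactly what one reads off the unfoldings.
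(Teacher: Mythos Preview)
The paper does not prove this lemma: it is quoted verbatim as Lemma~34 of the author's earlier paper~\cite{me:stabs}, and the present paper offers nothing beyond the citation and a pointer to~\cite{me:stabs} for more details. So there is no in-paper proof to compare against. Your overall framework---package the family as a map $M\times[0,1]\to[-1,1]^2\times[0,1]$, invoke multijet transversality in the Kobayashi--Saeki style to stratify the space of sweep-out pairs, and read off the codimension-one strata---is the standard approach and is presumably what~\cite{me:stabs} does.

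That said, your third step contains a genuine misidentification. You classify moves (3), (5), (6) as ``global'' events in which two vertices of the graphic momentarily share a horizontal or vertical coordinate. That is not what those moves are. All six listed moves are changes in the \emph{combinatorics of the graphic itself}, i.e.\ codimension-one singularities of the apparent contour $\Delta_r\to[-1,1]^2$: in (3) three edges become momentarily concurrent at a triple point, in (5) two edges become tangent and then cross or uncross, and in (6) a cusp slides through an arc. None of these has anything to do with the coordinate axes. The clause ``each arc $\{t\}\times[-1,1]$ or $[-1,1]\times\{s\}$ contains at most one vertex'' in the definition of genericity is a \emph{separate} codimension-one condition; when it fails, two vertices briefly share a $t$- or $s$-coordinate but the graphic does not change combinatorially and none of the six moves occurs. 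So your enumeration needs to be reorganized: moves (1), (2), (4) come from degenerations of the local singularity type (swallowtail, beak/lips, etc.), moves (3), (5), (6) come from multilocal coincidences in the image (two or three branches of the contour meeting), and the horizontal/vertical alignment events form yet another, structurally trivial, family of non-generic instants that the lemma as stated does not list.
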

\begin{figure}[htb]
  \begin{center}
  \includegraphics[width=3.5in]{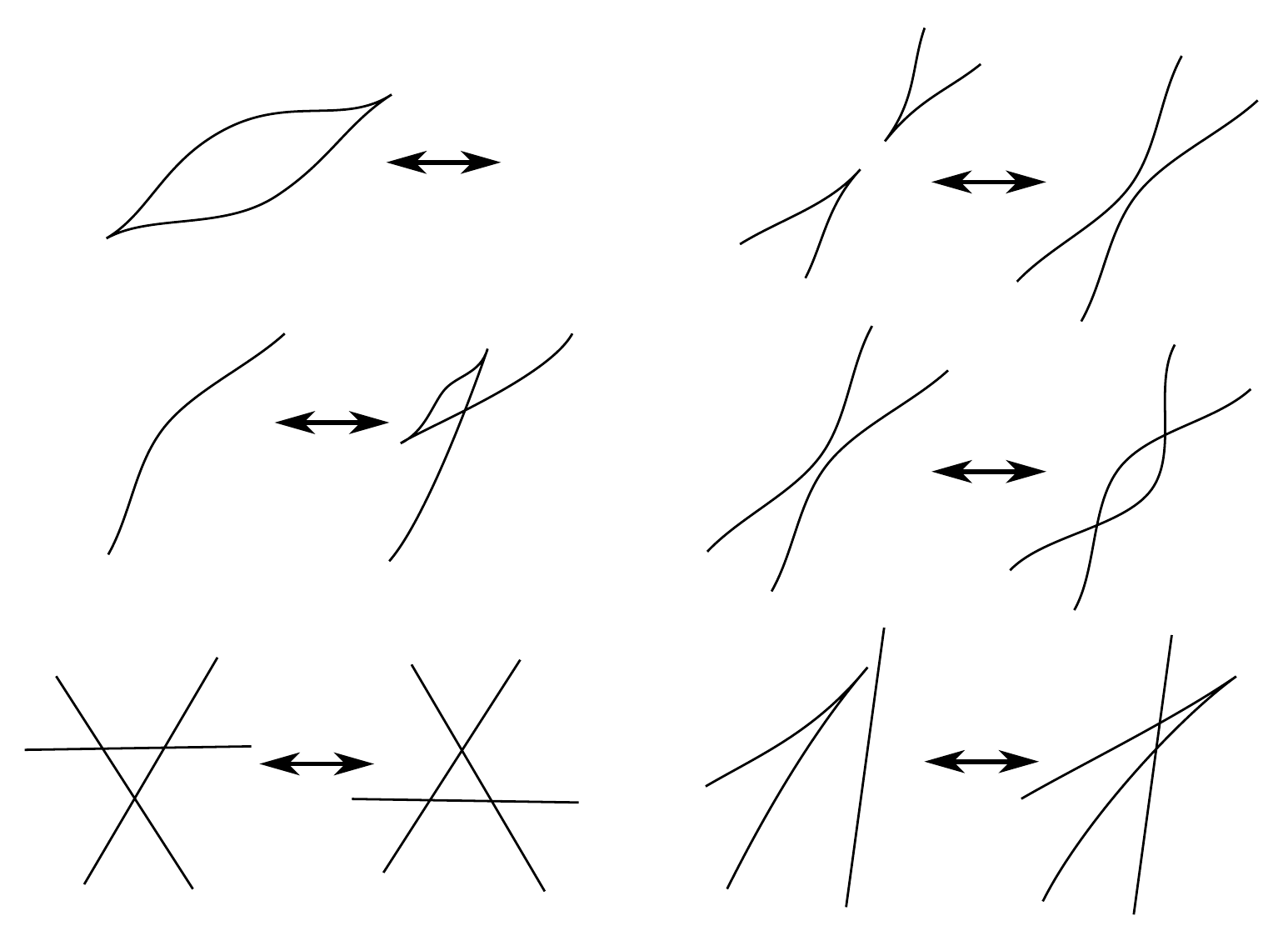}
  \caption{Changes to the graphic under an isotopy of $f$.}
  \label{fig:graphicmoves}
  \end{center}
\end{figure}

For a more details description of these three moves, see~\cite{me:stabs}.

\section{Spanning and Splitting}
\label{facingsect}

Let $f$ and $h$ be sweep-outs. For each $s \in (-1,1)$, define $\Sigma_s = h^{-1}(s)$, $H^-_s = h^{-1}([-1,s])$ and $H^+_s = h^{-1}([s,1])$.  Similarly, for $t \in (-1,1)$, define $\Sigma'_t = f^{-1}(t)$.  Following~\cite{me:stabs}, we will say that $\Sigma'_t$ is \textit{mostly above} $\Sigma_s$ if each component of $\Sigma'_t \cap H^-_s$ is contained in a disk subset of $\Sigma'_t$.  Similarly, $\Sigma'_t$ is \textit{mostly below} $\Sigma_s$ if each component of $\Sigma'_t \cap H^+_s$ is contained in a disk in $\Sigma'_t$.  

Figure~\ref{fig:abovebelow} shows the three possible positions for $\Sigma'_t$ (shown in blue) with respect to $\Sigma_s$ for three different values of $s$ (outlined in red). For the highest value of $s$, $\Sigma'_t$ is mostly below $\Sigma_s$. For the lowest value, it's mostly above, and for the middle value (in which $\Sigma_s$ looks like a quadrilateral, $\Sigma'_t$ is neither mostly above nor mostly below.
\begin{figure}[htb]
  \begin{center}
  \includegraphics[width=3.5in]{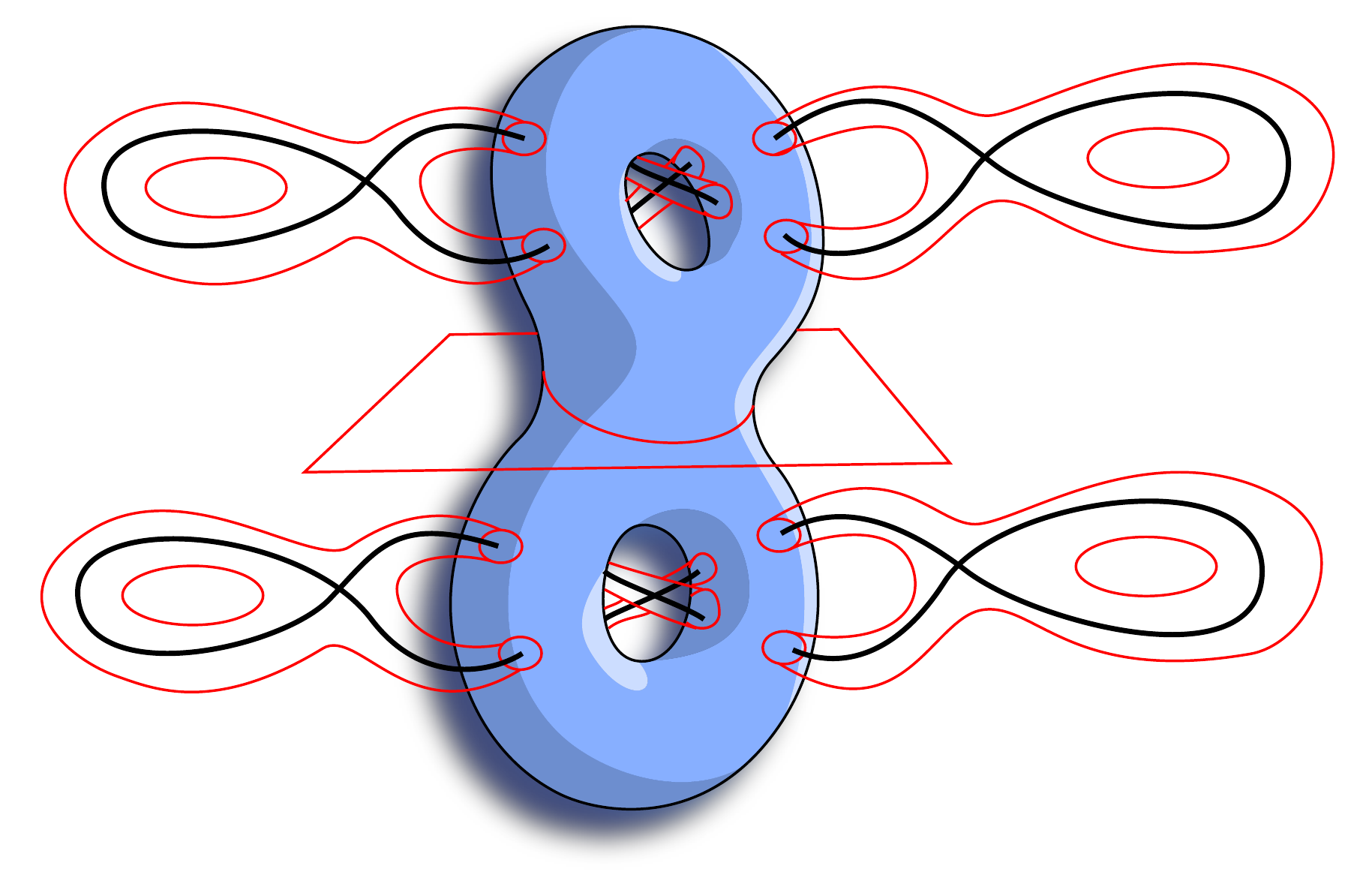}
  \caption{Changes to the graphic under an isotopy of $f$.}
  \label{fig:abovebelow}
  \end{center}
\end{figure}

Let $R_a \subset (-1,1) \times (-1,1)$ be the set of all values $(t,s)$ such that $\Sigma'_t$ is mostly above $\Sigma_s$.  Let $R_b \subset (-1,1) \times (-1,1)$ be the set of all values $(t,s)$ such that $\Sigma'_t$ is mostly below $\Sigma_s$.  For any fixed $t \in (-1,1)$, there will be values $a,b$ such that $\Sigma'_t$ will be mostly above $\Sigma_s$ if and only if $s \in [-1,a)$ and mostly above $\Sigma_s$ if and only if $s \in (b,1]$. In particular, both regions will be vertically convex.

As noted in~\cite{me:stabs}, the closure of $R_a$ in $(-1,1) \times (-1,1)$ is bounded by arcs of the Rubinstein-Scharlemann graphic, as is the closure of $R_b$.  The closures of $R_a$ and $R_b$ are disjoint (as long as the level surfaces of $f$ have genus at least two.)

\begin{Def}
\label{ffdef}
Given a generic pair $f$, $h$, we will say $h$ \textit{spans} $f$ if there is a horizontal arc $[-1,1] \times \{s\}$ that intersects the interiors of both regions $R_a$ and $R_b$, as on the left side of Figure~\ref{fig:spansplit}.  Otherwise, we will say that $h$ \textit{splits} $f$, as on the right of the figure.
\end{Def}
\begin{figure}[htb]
  \begin{center}
  \includegraphics[width=2.5in]{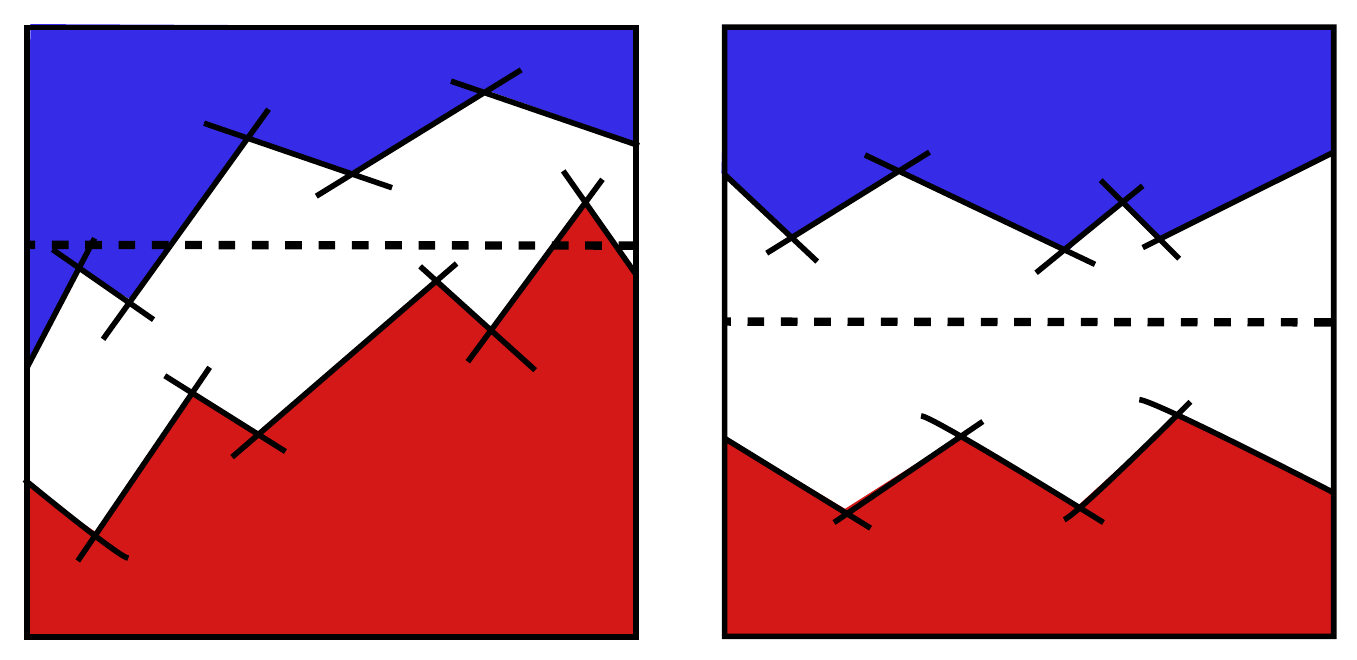}
  \caption{Spanning and splitting forms of the graphic.}
  \label{fig:spansplit}
  \end{center}
\end{figure}

Let $f$ be a sweep-out representing $(\Sigma', H^-_{\Sigma'}, H^+_{\Sigma'})$.  By Lemma~16 in~\cite{me:stabs} we can choose a sweep-out $h$ for $(\Sigma, H^-_\Sigma, H^+_\Sigma)$ that spans $f$.

We can identify $\Sigma$ with any level surface of $h$ by an isotopy and we will choose a specific surface in a moment.  For now, note that once we have chosen a level surface of $h$ to represent $\Sigma$, every element of $Isot(M, \Sigma)$ is represented by an isotopy of $\Sigma$, which can be extended to an ambient isotopy of $h$.  By Lemma~\ref{isotopesweepslem}, we can choose the induced family of sweep-outs $\{h_r\}$ to be generic for all but finitely many values of $r$.  At all these values, $h_r$ either spans or splits $f$. We will rule out splitting using the following Lemma:

\begin{Lem}[Lemma 27 in~\cite{me:stabs}]
\label{nospanninglem}
If $h$ splits $f$ then $d(\Sigma')$ is at most twice the genus of $\Sigma$.
\end{Lem}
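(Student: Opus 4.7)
The plan is to use the double sweep-out machinery to replace an arbitrary isotopy of $\Sigma$ by one with the required piecewise structure.

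Fix a sweep-out $f$ representing $(\Sigma', H^-_{\Sigma'}, H^+_{\Sigma'})$ with spines $K^\pm$, and by Lemma~16 of~\cite{me:stabs} fix a sweep-out $h$ representing $(\Sigma, H^-_\Sigma, H^+_\Sigma)$ that spans $f$. Identify $\Sigma$ with a chosen level surface of $h$. Given an element of $Isot(M,\Sigma)$ represented by $\{\Sigma_r\}$, extend to an ambient isotopy $\phi_r$ of $M$ and form the family of sweep-outs $h_r = h \circ \phi_r^{-1}$. By Lemma~\ref{isotopesweepslem}, after a conjugate perturbation we may assume $f \times h_r$ is generic except at finitely many values $r_1 < \dots < r_m$, at which the graphic undergoes one of the six elementary moves.

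At each generic $r$ the graphic either spans or splits. Since $\mathrm{genus}(\Sigma) = g+1$, the hypothesis $d(\Sigma') > 2g+2 = 2\,\mathrm{genus}(\Sigma)$ together with Lemma~\ref{nospanninglem} rules out splitting; hence $h_r$ spans $f$ for every generic $r$. A spanning height $s$ at parameter $r$ furnishes values $t_1, t_2$ with $\Sigma'_{t_1}$ mostly above and $\Sigma'_{t_2}$ mostly below the witness surface $h_r^{-1}(s)$. The standard Rubinstein--Scharlemann compressions, applied along innermost disks of $\Sigma'_{t_1} \cap H^-_s$ and $\Sigma'_{t_2} \cap H^+_s$, then isotope $h_r^{-1}(s)$ within the complement of $K^- \cup K^+$ to a Heegaard surface for $M \setminus (K^- \cup K^+)$; in particular the witness can be made disjoint from both spines.

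Now build the modified isotopy. At each generic $r$ choose a spanning height $s(r)$, and set $\widetilde\Sigma_r := h_r^{-1}(s(r))$. Within each spanning region, sub-ranges of heights can be distinguished according to which of $K^-$ and $K^+$ lies on which side of the witness surface; heights near the ``low'' end of the spanning interval yield surfaces that can be isotoped (within the complement of $K^-$) to lie in $H^+_{\Sigma'}$, while heights near the ``high'' end yield surfaces that lie (within the complement of $K^+$) in $H^-_{\Sigma'}$. Choose $s(r)$ piecewise-continuously, so that on each sub-interval bounded by the $r_i$'s and a finite number of additional breakpoints, $\widetilde\Sigma_r$ is an isotopy disjoint from a single fixed spine in $\{K^-,K^+\}$. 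At each breakpoint, route the transition through a surface disjoint from both spines, which is available by the previous paragraph. Finally, append short interpolations at $r=0,1$ through levels of the fixed sweep-out $h$, connecting $\widetilde\Sigma_0$ and $\widetilde\Sigma_1$ back to $\Sigma = h^{-1}(s_0)$; these interpolations are supported in $\Sigma \times (-1,1)$ and themselves decompose into pieces each disjoint from a single spine. The resulting concatenation represents the same element of $Isot(M,\Sigma)$ and satisfies the hypotheses of Lemma~\ref{ifneverbothlem}.

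The main obstacle is handling the finitely many non-generic parameters $r_i$. At each $r_i$ the graphic undergoes one of the six moves of Lemma~\ref{isotopesweepslem}, which can locally modify the spanning region and shift the boundary between the ``disjoint from $K^-$'' and ``disjoint from $K^+$'' sub-ranges of heights. A case-by-case check of the six moves is required to verify that $s(r)$ extends across $r_i$ in a manner compatible with the target spine-disjointness --- if necessary by splicing in a short transit through a Heegaard surface for $M \setminus (K^- \cup K^+)$ --- and that the concatenation is a bona fide isotopy of $\Sigma$ representing the original mapping class. This combinatorial verification is the heart of the argument.
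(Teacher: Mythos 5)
Your proposal does not prove the stated lemma. Lemma~\ref{nospanninglem} is the assertion that a \emph{splitting} configuration of the graphic forces the bound $d(\Sigma') \leq 2\,\mathrm{genus}(\Sigma)$; it is a quantitative statement relating the combinatorics of the Rubinstein--Scharlemann graphic for a single generic pair $(f,h)$ to the Hempel distance of $\Sigma'$. What you have written instead is an outline of the proof of Lemma~\ref{neverbothlem} (the parametrized argument that every element of $Isot(M,\Sigma)$ admits an isotopy satisfying Lemma~\ref{ifneverbothlem}). Worse, your argument is circular with respect to the target: in your second paragraph you invoke ``Lemma~\ref{nospanninglem}'' to rule out splitting, i.e.\ you use as a black box exactly the statement you were asked to establish.

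A proof of the actual statement has to go in a different direction. Splitting means no horizontal arc of the square meets the interiors of both $R_a$ and $R_b$, so as $s$ increases from $-1$ to $1$ one passes from heights where some $\Sigma'_t$ is mostly above $\Sigma_s$ to heights where some $\Sigma'_t$ is mostly below, without ever having both simultaneously. In the intermediate range each level surface $\Sigma_s$ induces, via an essential curve of $\Sigma_s \cap \Sigma'_t$ for suitable $t$, a vertex of the curve complex of $\Sigma'$; one shows that the first such vertex bounds a disk in $H^-_{\Sigma'}$ (or is distance one from the disk set), the last bounds a disk in $H^+_{\Sigma'}$, and that consecutive vertices along the way are at distance at most one, with the number of steps controlled by an Euler characteristic count on $\Sigma$ --- yielding a path of length at most $2\,\mathrm{genus}(\Sigma)$ between the two disk sets. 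None of this curve-complex bookkeeping appears in your write-up, so the lemma remains unproved. (Note also that the paper itself does not reprove this lemma; it imports it as Lemma~27 of~\cite{me:stabs}, which is where the argument sketched above is carried out.)
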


Note that the notation here is slightly different than that used in~\cite{me:stabs}. In particular, the surfaces $\Sigma$ and $\Sigma'$ play the opposite roles in~\cite{me:stabs} as they do here.  In the statement of Theorem~\ref{mainthm}, we assume that $d(\Sigma')$ is strictly greater than $2g+2$, so Lemma~\ref{nospanninglem} implies that $h_r$ spans $f$ for every generic value of $r$.  This will be the key to proving Lemma~\ref{neverbothlem}

\section{Bicompressible surfaces in handlebodies}
\label{bicompsect}

Recall that a two-sided surface $S \subset M$ is \textit{bicompressible} if there are compressing disks for $S$ on both sides of the surface.  We will say that $S$ is \textit{reducible} if there is a sphere $P \subset M$ such that $P \cap S$ is a single loop that is essential in $S$.  This term is usually used only for Heegaard surfaces, but we will apply it here to general bicompressible surfaces.

\begin{Lem}
\label{bicomplem}
Let $F$ be a closed, orientable, genus $g$ surface and $S \subset (F \times [0,1])$ a closed, embedded, bicompressible surface of genus $g+1$ that separates $F \times \{0\}$ from $F \times \{1\}$.  Then $S$ is reducible.
\end{Lem}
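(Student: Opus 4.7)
The plan rests on two basic facts about $F\times I$: (i) any surface $S'\subset F\times I$ separating $F\times\{0\}$ from $F\times\{1\}$ has genus at least $g$, because the inclusion $F\times\{0\}\hookrightarrow F\times I$ is a $\pi_1$-isomorphism that factors through the inclusion of $S'$, forcing $\pi_1(S')\twoheadrightarrow\pi_1(F)$; and (ii) Waldhausen's theorem, which says any closed incompressible surface in $F\times I$ is parallel to $F$.

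By bicompressibility, we get compressing disks $D^-\subset X^-$ and $D^+\subset X^+$ on the two sides of $S$, where $X^\pm$ denote the two components of $(F\times I)\setminus S$. I would first argue that $D^-$ and $D^+$ may be chosen so that $\partial D^-\cap\partial D^+=\emptyset$ on $S$. Since $S$ has genus $g+1$ while the minimal Heegaard genus of $F\times I$ is $g$, the splitting determined by $S$ is already stabilized and so cannot be strongly irreducible; a standard cut-and-paste argument, minimizing $|\partial D^-\cap\partial D^+|$ over all choices of compressing-disk pairs and trading an inner bigon or outermost arc, then reduces the intersection count to zero.

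With disjoint boundaries in hand, compress $S$ along $D^-$ to obtain a surface $S^\#$ that still separates the two boundary copies of $F$. Fact (i) forbids a sphere component, so if $\partial D^-$ is separating then $S^\#$ splits into a torus together with a genus-$g$ component; I would dispose of this sub-case either by showing the torus bounds a suitably controlled region of $F\times I$ (producing a reducing sphere directly) or by replacing $D^-$ with a non-separating compressing disk. So we may assume $\partial D^-$ is non-separating and $S^\#$ has genus $g$. Then (i) also prevents any further compression of $S^\#$ on either side (which would yield a genus-$(g-1)$ separator), so $S^\#$ is incompressible; by (ii) it is parallel to some $F\times\{t_0\}$. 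Thus $S$ is obtained from this parallel copy by attaching a 1-handle whose cocore is $D^-$ and whose core is an arc $\alpha$ in one of the two product regions cut off by $S^\#$.

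Finally, I would use $D^+$ to conclude reducibility. Compressing $S$ along both disjoint disks $D^\pm$ simultaneously produces a surface of genus $g-1$ whenever $\partial D^-$ and $\partial D^+$ are non-parallel on $S$; but that surface would still separate the two copies of $F$, contradicting (i). Hence $\partial D^-$ and $\partial D^+$ are parallel on $S$. After an isotopy of $D^+$ making $\partial D^+=\partial D^-$, the union $D^-\cup D^+$ is an embedded 2-sphere meeting $S$ in the single essential loop $\partial D^-$, which is the required reducing sphere. The main obstacle I expect is the separating sub-case in the compression along $D^-$: the torus byproduct must be controlled carefully using the product structure of $F\times I$, and it is this step (rather than the final sphere construction) that will require the most care.
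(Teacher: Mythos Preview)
Your central gap is the claim that $D^-$ and $D^+$ can be chosen with $\partial D^-\cap\partial D^+=\emptyset$. The justification you offer is circular: to say that ``the splitting determined by $S$ is already stabilized'' presupposes that $S$ is a Heegaard surface for $F\times I$, i.e.\ that both complementary regions $X^\pm$ are compression bodies, and nothing in the hypotheses gives you that. Bicompressible does not imply Heegaard. Nor does the cut-and-paste you describe do the job: since $D^-$ and $D^+$ lie on opposite sides of $S$, their interiors are already disjoint, so there are no ``outermost arcs'' to trade, and innermost-bigon moves on $S$ only remove inessential intersections of $\partial D^-$ and $\partial D^+$; two curves on $S$ can perfectly well intersect essentially. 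What you are really asserting is that $S$ is weakly reducible, and that is essentially equivalent to the lemma you are trying to prove. Your final step also needs a small patch: if $\partial D^+$ happens to separate $S$ (cutting off a once-punctured torus containing $\partial D^-$), compressing along both disks does not drop the genus below $g$, so your parallelism argument does not fire; one must instead observe directly that the sphere $D^+\cup(\text{disk in }S^\#)$ is a reducing sphere.

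The paper's proof avoids the disjointness issue entirely by never asking for it. It treats the two sides independently: if some $D^-$ is non-separating, then compressing shows $X^-$ is a compression body; likewise for $X^+$. If both sides admit non-separating disks, then $S$ \emph{is} a Heegaard surface for $F\times I$, and Scharlemann--Thompson's classification gives reducibility. The remaining case---every disk on one side is separating---is exactly the torus sub-case you flagged as the main obstacle; the paper handles it by compressing the resulting torus to a sphere and arguing from the position of the dual arc that this sphere must bound a ball containing the torus, yielding the reducing sphere. So rather than trying to force $\partial D^-\cap\partial D^+=\emptyset$, you should analyze each side's compression separately and invoke the classification of genus $g{+}1$ Heegaard splittings of $F\times I$ when both sides turn out to be compression bodies.
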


\begin{proof}
Let $C^-$ be the closure of the component of $F \times [0,1] \setminus S$ adjacent to $F \times \{0\}$ and let $C^+$ be closure of the other component.  Because $S$ is bicompressible, there are compressing disks $D^- \subset C^-$ and $D^+ \subset C^+$ for $S$.  

If $\partial D^-$ is non-separating in $S$ then compressing $S$ across $D^-$ produces a genus $g$ surface $S^-$ that separates $F \times \{0\}$ from $F \times \{1\}$.  Because $F$ is also genus $g$, the surface $S^-$ must be isotopic to $F \times \{\frac{1}{2}\}$.  In other words, $S^-$ separates $F \times [0,1]$ into two pieces homeomorphic to $F \times [0,1]$.  If we reattach the tube to produce $S$ from $S^-$, we see that $C^-$ is a compression body that results from attaching a single one-handle to $F \times [0,1]$.  

The same argument applies to $D^+$ and $C^+$.  Thus if we can choose $D^-$ and $D^+$ to be non-separating, $S$ will be a genus $g+1$ Heegaard surface for $F \times [0,1]$.  Every genus $g+1$ Heegaard surface for $F \times [0,1]$ is reducible by Scharlemann-Thompson's classification of Heegaard splittings for surface-cross-intervals~\cite{stcross}, so in this case we conclude that $S$ is reducible.  

Otherwise, assume without loss of generality that every compressing disk for $S$ in $C^-$ is separating.  If we compress $S$ along such a disk $D^-$ then the resulting surface $S^-$ consists of a genus $g$ component and a torus.  As above, the genus $g$ component must be isotopic to $F \times \{\frac{1}{2}\}$ so that the torus component of $S^-$ is contained in $F \times [\frac{1}{2}, 1]$.

Because $F \times [\frac{1}{2}, 1]$ is atoroidal, the torus component $T$ $S^-$ can be compressed to form a sphere $T'$. Because $F \times [\frac{1}{2}, 1]$ is irreducible, $T'$ bounds a ball and we can recover $T$ by attaching a tube to $T'$. Thus either bounds a solid torus or is contained in a ball (and bounds a knot complement), depending on which side of $T'$ the tube us attached.  In either case, $T$ has a (non-separating) compressing disk $D^+$.  There is an arc $\alpha$ dual to the disk $D^-$ from the genus $g$ component of $S^-$ to $T$.  Because $D^-$ and $D^+$ are on opposite sides of $S$, the arc most be adjacent to $T$ on the same side of the surface as the disk $D^+$.  This is impossible if $T$ bounds a solid torus on the side containing $D^+$.  Thus $T$ must be contained in a ball that intersects the arc $\alpha$ in a single point.  This sphere will intersect $S$ in a single essential loop, so $S$ is reducible (though not necessarily a Heegaard surface).
\end{proof}

\section{The Proof of Theorem~\ref{mainthm}}
\label{proofsect}

\begin{proof}[Proof of Lemma~\ref{neverbothlem}]
By assumption, $d(\Sigma') > 2g + 2$ where $g$ is the genus of $\Sigma'$. Because $\Sigma$ is a stabilization of $\Sigma'$, its genus is $g+1$, so by Lemma~\ref{nospanninglem}, the graphic $f \times h_r$ can never be spanning. Thus there is a value $s_r$ for each sweep-out $h_r$ such that the horizontal line $[0,1] \times \{s_r\}$ passes through both regions $R_a$, $R_b$ of the graphic.  Moreover, we can choose the values $s_r$ so that they vary continuously with $r$ and $s_0 = s_1$.  We will further choose values $a_r$, $b_r$ that vary continuously, except for finitely many jumps, such that $(a_r, s_r) \in R_a$ and $(b_r, s_r) \in R_b$. (A jump occurs when $a_r$ or $b_r$ is in a ``tooth'' of $R_a$ or $R_b$ that is moved away from the horizontal arc, as in Figure~\ref{fig:spanjump}, and we choose a new point in a different tooth that still intersects the arc.)
\begin{figure}[htb]
  \begin{center}
  \includegraphics[width=2.5in]{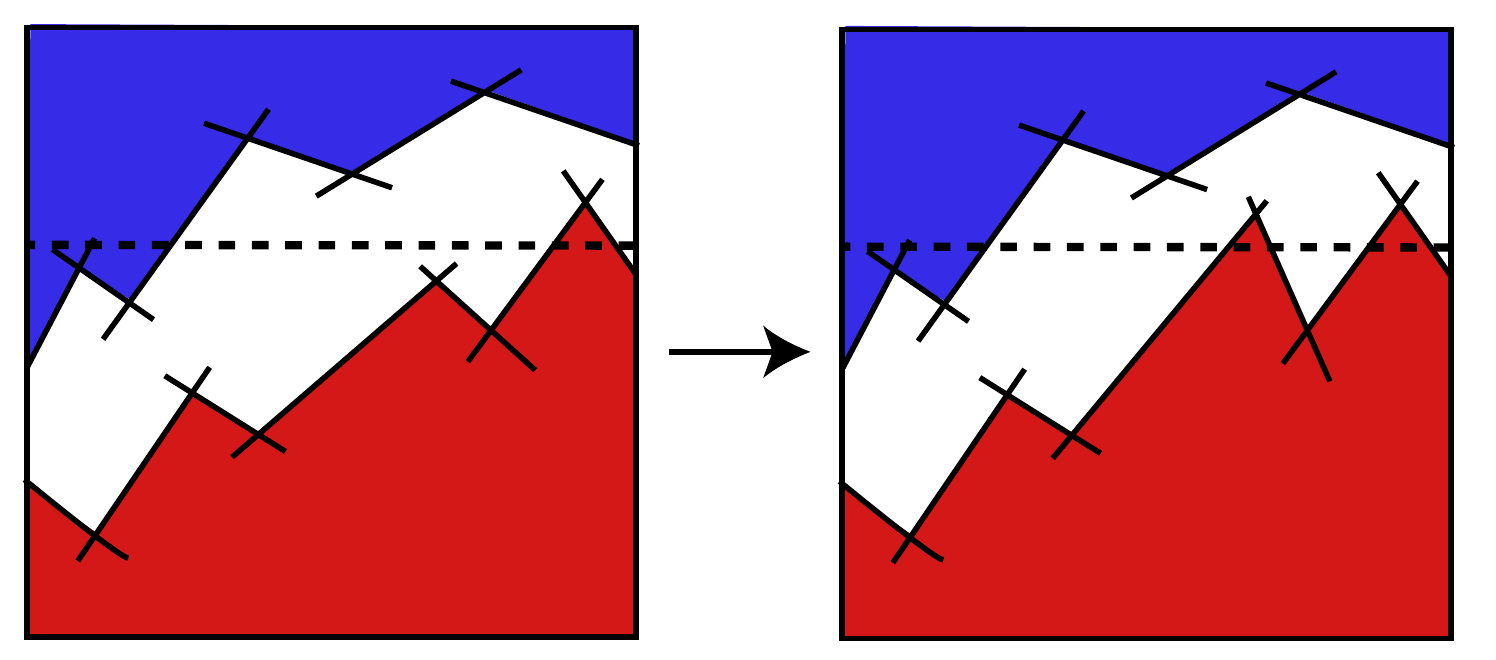}
  \caption{A jump in the value of $b_r$.}
  \label{fig:spanjump}
  \end{center}
\end{figure}

For each $r$, define $\Sigma_r = h^{-1}(s_r)$.  This family of surfaces defines an isotopy of $\Sigma$ corresponding to some element of $Isot(M, \Sigma)$.  We will modify this isotopy so that at every time $r$, the surface is disjoint from one of the spines $K^\pm$ as follows:

The restriction of $f$ to $\Sigma_r$ is a Morse function on the surface, and the level sets at $a_r$ and $b_r$ form loops in this surface.  In the level surface of $f$, these loops are trivial since $\Sigma$ is mostly above or mostly below $\Sigma'$ at these points.  Thus we can compress $\Sigma$ along these collections of loops to a surface that separates $K^-$ from $K^+$.  Because $\Sigma$ has genus exactly one more than $\Sigma'$, at most one of these loops can be an actual compression. For each $r$, this compression is either at $a_r$ or $b_r$.

At each of the finitely many values of $r$ where the values $a_r$ are discontinuous, let $a_r$ and $a'_r$ be the left and right limits. If the level set at level $b_r$ contains an essential loop then the level sets at both levels $a_r$ and $a'_r$ must be trivial, so neither defines a compression. If either of the level sets at $a_r$ or $a'_r$ does contain a compression then $b_r$ does not. The same argument holds for a jump in $b_r$.

Thus we can cut the interval $[0,1]$ at points $0 = r_0 < r_1 < \dots < r_k = 1$ so that for $r \in [r_i, r_{i+1}]$, the level sets at level $b_r$ are trivial when $i$ is even and the loops at level $a_r$ are trivial when $i$ is odd.  Moreover, we can assume that for each even $i$, there is an $r \in [r_i, r_{i+1}]$ such that the level loops of $a_r$ are non-trivial and vice-versa for odd $i$.  In other words, we want to cut the interval into as few subintervals as possible.

For $r \in [r_0, r_1]$, the level loops in $\Sigma_r$ at level $b_r$ are trivial in $\Sigma_r$.  Each of these loops bounds a disk in each of the surfaces $\Sigma_r$, $\Sigma'_{b_r}$ and there is, up to isotopy, a unique way to project the disk in $\Sigma_r$ onto the disk $\Sigma'_{a_r}$, as in Figure~\ref{fig:makedsjt}.  Let $S_r$ be the result of projecting all the disks in this way and assume we have chosen the projections to vary continuously with $r$ along the interval.
\begin{figure}[htb]
  \begin{center}
  \includegraphics[width=3.5in]{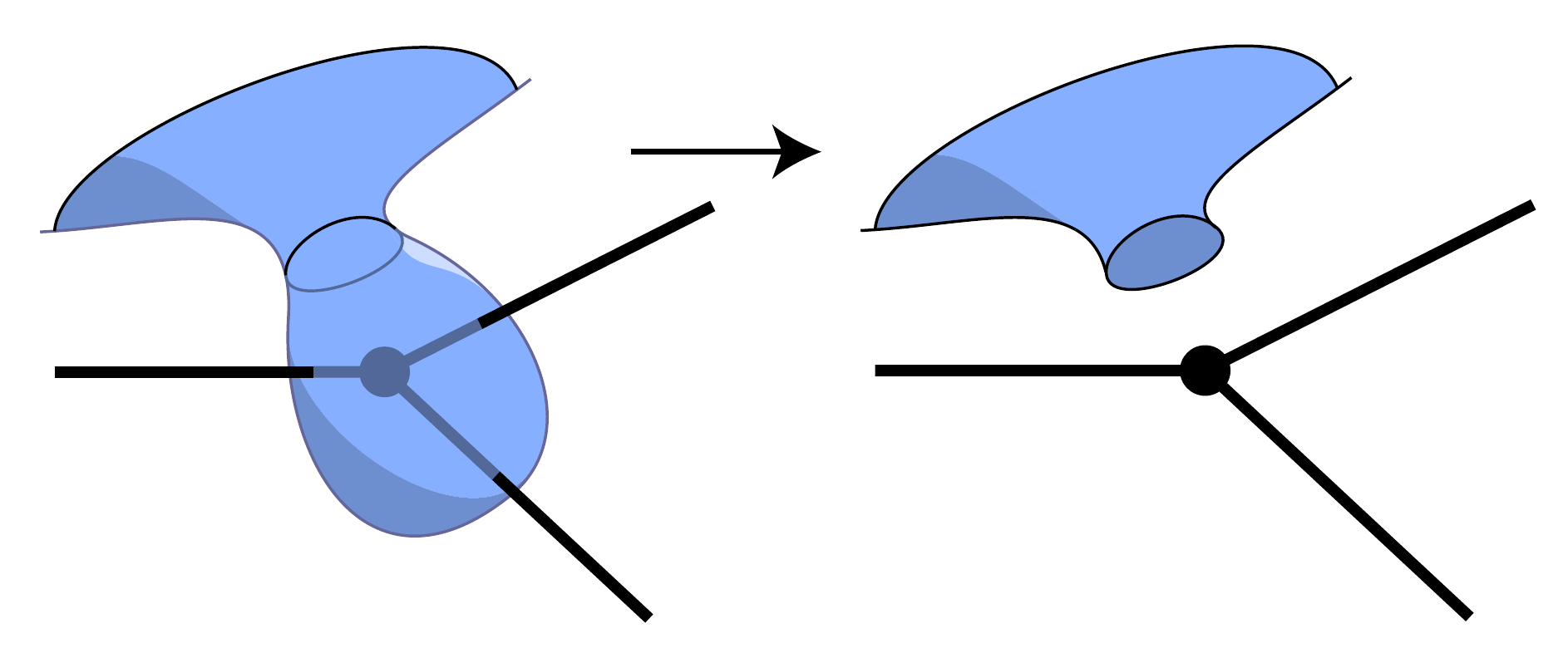}
  \caption{Removing trivial intersections between $\Sigma$ and a spine.}
  \label{fig:makedsjt}
  \end{center}
\end{figure}

After the projection, the surface $S_r$ is entirely below level $b_r$ and is thus disjoint from $K^+$.  For the intervals in which the level loops at level $a_r$ are trivial, $S_r$ will be disjoint from $K^-$.  Repeat this construction for each $i$.  The resulting isotopy defines the same element of $Mod(M, \Sigma)$ as the original isotopy, so all that remains is to show that $\Sigma_{r_i}$ is a Heegaard surface for the complement of both spines for each $i$.

We will start with $r_1$ and then repeat the argument for each $i$.  By assumption, there is an $r \in [r_0, r_1]$ such that the level set of $\Sigma_r$ at level $b_r$ is non-trivial.  If we compress along all these loops, the resulting surface will contain a component isotopic to the boundary of a regular neighborhood of $K^-$, which is on the negative side of $\Sigma_r$.  Thus at least one of the compressions must have been on the negative side of $\Sigma_r$.

Similarly, if we choose $r' \in [r_1, r_2]$, we can find a compressing disk on the positive side of $\Sigma_{r'}$.  If we choose $r$ to be the last such value and $r'$ to be the first such value, then these compressing disks will determine compressing disks on both sides of $\Sigma_{r_1}$.  Since $\Sigma_{r_1}$ is bicompressible, it is reducible by Lemma~\ref{bicomplem}, as in Figure~\ref{fig:knottedhandle}.
\begin{figure}[htb]
  \begin{center}
  \includegraphics[width=3.5in]{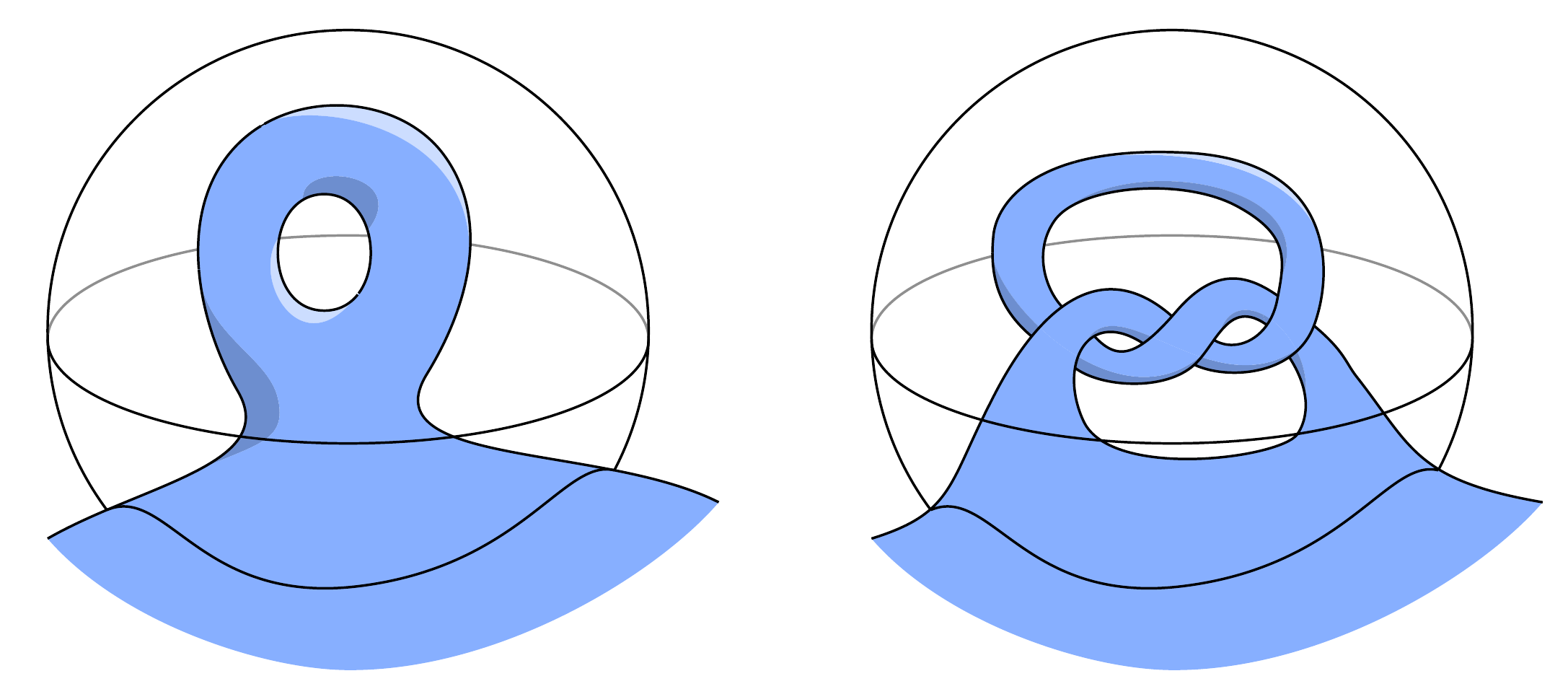}
  \caption{Two possible reducible surfaces in $M \setminus K^+$.}
  \label{fig:knottedhandle}
  \end{center}
\end{figure}

For each $r \in [r_0, r_1]$, the surface $\Sigma_r$ is disjoint from $K^-$.  The surfaces thus determine an isotopy of the surface inside the handlebody $M \setminus K^+$, so each $\Sigma_r$ is a Heegaard surface for this handlebody.  Any reducing sphere for a Heegaard surface in an irreducible three-manifold determines a Heegaard surface for the ball bounded by the reducing sphere. In this case, we get a genus-one Heegaard surface, which is a standard unknotted torus~\cite{Waldhausen}. Thus the reducing sphere for $\Sigma_{r_1}$, which is disjoint from both spines, bounds an unknotted handle, as on the left in Figure~\ref{fig:knottedhandle}, and $\Sigma_{r_1}$ is a Heegaard surface for the complement of the two spines.  By repeating this argument for each successive $i$, we complete the proof.
\end{proof}

\begin{proof}[Proof of Theorem~\ref{mainthm}] 
Let $\gamma \in Isot(M, \Sigma)$ be an element of the isotopy subgroup of $\Sigma$, which is the result of stabilizing a Heegaard surface $\Sigma'$ exactly once.  Assume the distance $d(\Sigma')$ is strictly greater than $2g+2$.  Then by Lemma~\ref{neverbothlem}, we can represent $\gamma$ by an isotopy satisfying the conditions of Lemma~\ref{ifneverbothlem}.  Then by Lemma~\ref{ifneverbothlem}, $\gamma$ is in the subgroup generated by $St^-(\Sigma) \cup St^+(\Sigma) \subset Isot(M, \Sigma)$.  

Since $\gamma$ was an arbitrary element, $St^-(\Sigma) \cup St^+(\Sigma)$ must generate the entire group $Isot(M, \Sigma)$.  Because $d(\Sigma) > 2g+2 > 3$, $M$ is hyperbolic by Hempel's Theorem~\cite{hempel} (and geometrization). Thus $Mod(M)$ is finite, so $Isot(M, \Sigma)$ is a finite index subgroup of $Mod(M, \Sigma)$.  Since a finite index subgroup of $Mod(M, \Sigma)$ is finitely generated, the entire group is finitely generated.
\end{proof}

\bibliographystyle{amsplain}
\bibliography{stabmcg}

\end{document}